\newcommand{\Dom}{D}
\newcommand{\pmat}[1]{\begin{bmatrix}#1\end{bmatrix}}
\newcommand{\pmatsmall}[1]{\begin{bsmallmatrix}#1\end{bsmallmatrix}}
\newcommand*{\keyterm}[1]{\emph{#1}}
\DeclareMathOperator{\re}{Re}	
\newcommand*{\Lin}{{\mathcal{L}}}
\newcommand{\ga}{\alpha}
\newcommand{\gb}{\beta}
\newcommand{\gd}{\delta}
\newcommand{\gl}{\lambda}
\newcommand*{\C}{{\mathbb{C}}}     
\newcommand*{\R}{{\mathbb{R}}}     
\newcommand{\citel}[2]{\cite[#2]{#1}}
\newcommand{\eq}[1]{\begin{align*}#1\end{align*}}
\newcommand{\eqn}[1]{\begin{align}#1\end{align}}
\newcommand*{\abs} [1]{\lvert#1\rvert}
\newcommand*{\norm}[1]{\lVert#1\rVert}
\newcommand*{\setm}[2]{\{\,#1\mid#2\,\}}   
\newcommand*{\iprod}[2]{\langle#1,#2\rangle}    
\newcommand*{\Abs}[2][default]{\ifthenelse{\equal{#1}{default}}{\left\lvert#2\right\rvert}{\ldelim{#1}{\lvert}#2\rdelim{#1}{\rvert}}}
\newcommand*{\Norm}[2][default]{\ifthenelse{\equal{#1}{default}}{\left\lVert#2\right\rVert}{\ldelim{#1}{\lVert}#2\rdelim{#1}{\rVert}}}
\newcommand*{\Iprod}[3][default]{\ifthenelse{\equal{#1}{default}}{\left\langle#2,#3\right\rangle}{\ldelim{#1}{\langle}#2,#3\rdelim{#1}{\rangle}}}
\newcommand*{\ddb}[2][1]{\ifthenelse{\equal{#1}{1}}{\frac{d}{d#2}}{\frac{d^{#1}}{d#2^{#1}}}}
\newcommand*{\pd}[3][1]{\ifthenelse{\equal{#1}{1}}{\frac{\partial{#2}}{\partial{#3}}}{\frac{\partial^{#1}{#2}}{\partial#3^{#1}}}}
\newcommand*{\inv}{^{-1}}
\newcommand*{\Lp}[1][p]{L^{#1}}
\newcommand*{\conj}[1]{\overline{#1}}
\newcommand{\tofrom}{\leftrightarrow}
\newcommand{\CL}{C_\Lambda}
\newcommand{\CLc}{C_{c\Lambda}}
\newtheorem{theorem}{Theorem}[section]
\newtheorem{proposition}[theorem]{Proposition}
\theoremstyle{definition}
\newtheorem{remark}[theorem]{Remark}
\numberwithin{equation}{section}
\begin{document}

\title[Polynomial Stability of Coupled PDEs]{On Polynomial Stability of Coupled Partial Differential Equations in 1D}

\thispagestyle{plain}

\author{Lassi Paunonen}
\address{Mathematics, Faculty of Information Technology and Communication Sciences, Tampere University, PO.\ Box 692, 33101 Tampere, Finland}
 \email{lassi.paunonen@tuni.fi}
 \thanks{The research is supported by the Academy of Finland Grant numbers 298182 and 310489 held by L. Paunonen.}

\begin{abstract}
  We study the well-posedness and asymptotic behaviour of selected PDE--PDE and PDE--ODE systems on one-dimensional spatial domains, namely a boundary coupled wave--heat system and a wave equation with a dynamic boundary condition. We prove well-posedness of the models and derive rational decay rates for the energy using an approach where the coupled systems are formulated as feedback interconnections of impedance passive regular linear systems.
\end{abstract}

\subjclass[2010]{%
35L05, 
35B35, 
93C05 
(47D06, 
93D15)
}

\keywords{Coupled PDE system, polynomial stability, wave equation, strongly continuous semigroup, systems theory, feedback.} 

\maketitle

\section{Introduction}

The purpose of this short paper is to discuss how an abstract ``system theoretic approach'' can be used in the study of stability properties of certain types of coupled linear PDE--PDE and PDE--ODE systems. 
In particular, several recent references have demonstrated that coupled PDE systems very often exhibit \keyterm{polynomial} and the more general \keyterm{non-uniform stability}~\cite{BatDuy08,BorTom10,RozSei19}, in which the energies of the classical solutions of the system decay at subexponential rates as $t\to\infty$.
While polynomial stability of many specific coupled PDE systems 
has been proved
in the literature~\cite{ZhaZua04,ZhaZua07,AvaLas16,BenAmm16,AmmMer12,Duy07,MerNic18} 
using a variety of powerful methods,
in this paper we focus on the usage of selected abstract results from~\cite{Pau19} establishing polynomial stability for a \keyterm{class} of such coupled systems.
More precisely, 
the results in~\cite{Pau19}  
approach the study of stability of coupled PDE--PDE and PDE--ODE systems by considering them as
abstract \keyterm{systems} which form \keyterm{a feedback interconnection}.
We demonstrate the use of this framework by proving polynomial stability for two PDE systems, 
a one-dimensional ``wave-heat system''
\begin{subequations}
  \label{eq:WHintro}
  \eqn{
    \rho(\xi)v_{tt}(\xi,t)&=(T(\xi)v_\xi(\xi,t))_\xi, \qquad -1<\xi <0, \\
    w_t(\xi,t)&=w_{\xi\xi}(\xi,t), \qquad\quad \qquad\; 0<\xi <1, \\
    v_\xi(-1,t)&=0, \qquad w(1,t)=0,\\
    v_t(0,t)&=  w(0,t), \qquad 
    T(0)v_\xi(0,t)= w_\xi(0,t) 
  }
\end{subequations}
and a wave equation with an ``acoustic boundary condition''
\begin{subequations}
\label{eq:ABCintro}
  \eqn{
    \rho(\xi) v_{tt}(\xi,t)&=(T(\xi)v_\xi(\xi,t))_\xi, \qquad 0<\xi <1, \\
     m \gd_{tt}(t) &=- d \gd_t(t)-k \gd(t)-\gb v_t(1,t)
 \\
 v_\xi(1,t)&=\gd_t(t), \qquad  
    v_t(0,t)=0. 
  }
\end{subequations}
The system~\eqref{eq:WHintro} is
similar to those studied in~\cite{ZhaZua04,BatPau16,AvaLas16}, but with a wave part that may have spatially varying density $\rho(\cdot)$ and Young's modulus $T(\cdot)$.
The system~\eqref{eq:ABCintro} is a one-dimensional analogue of wave equations used in
modelling the behaviour of acoustic waves
on higher dimensional spatial domains~\cite{Bea76,RivQin03}.

The abstract system theoretic approach has been employed in several studies on stability of coupled PDEs, especially by Ammari and co-authors~\cite{AbbAmm16,BenAmm16}, but it is still under-utilised as a technique and much of its potential remains hidden. 
This may be largely due to the fact that recognising particular PDE systems that fit in a given abstract framework is often less than straightforward, and 
formulating the particular coupled PDE system as an abstract  feedback interconnection often requires some effort. 
The purpose of this note is to demonstrate and discuss these steps for the two 
coupled systems~\eqref{eq:WHintro} and~\eqref{eq:ABCintro}.
In particular, our aim is to outline the general procedure and highlight the most important steps in using the results in~\cite{Pau19} to prove polynomial stability.

Equations~\eqref{eq:WHintro} and~\eqref{eq:ABCintro} fit into a general class of PDE--PDE and PDE--ODE systems which consist of two ``abstract component systems'' (see Figure~\ref{fig:PPI}) 
  with \keyterm{states}, \keyterm{inputs}, and \keyterm{outputs} $(x(t),u(t),y(t))$ and $(x_c(t),u_c(t),y_c(t))$
satisfying the following criteria.
\begin{itemize}
  \item[1.] One of the systems is unstable and the other is exponentially stable.
  \item[2.] Both systems are \keyterm{impedance passive}, meaning that they do not contain ``internal sources of energy'' (see Section~\ref{sec:Preliminaries} for details).
  \item[3.] The full coupled system is formed by a
``power-preserving interconnection''
\eqn{
  \label{eq:PPI}
  u(t)=y_c(t) \qquad \mbox{and} \qquad u_c(t)=-y(t)
}
(or alternatively 
$u(t)=-y_c(t)$ and $u_c(t)=y(t)$).
\end{itemize}
For such systems, the results in~\cite{Pau19} 
can be used to prove polynomial or non-uniform stability of the full coupled system by verifying certain conditions on the two component systems.

\begin{figure}[ht]
  \begin{center}
    \includegraphics[width=0.43\linewidth]{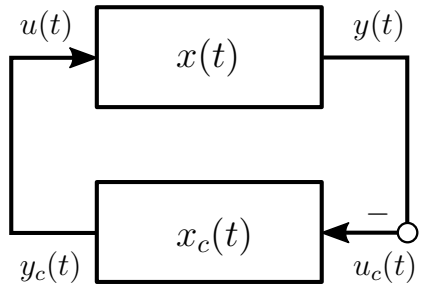}
  \end{center}
  \caption{The closed-loop system.}
  \label{fig:PPI}
\end{figure}

In many concrete PDE--PDE and PDE--ODE systems it is fairly easy to identify the unstable component 
(e.g., an undamped wave or beam equation, or an ODE with a skew-adjoint system matrix) and the stable component (e.g. a heat equation or a damped wave/beam, or a stable ODE).
However, making sure that the impedance passivity and the power-preserving interconnection are satisfied depends on the choices of the states, inputs, and outputs 
$(x(t),u(t),y(t))$ and $(x_c(t),u_c(t),y_c(t))$, as well as on the choices of the state spaces of these systems. 
Making the correct choices is often not straightforward, and this is precisely the process we aim to illustrate in this paper by considering the two PDE systems~\eqref{eq:WHintro} and~\eqref{eq:ABCintro}.
In these example cases the component systems that make up the full coupled systems have been studied extensively in the literature, and after expressing the systems as coupled abstract systems, we will find that all the conditions required for proving the polynomial closed-loop stability are either already known for our component systems, or can be computed explicitly with minimal effort.

The general framework of \keyterm{regular linear systems} used in~\cite{Pau19} involves several technical concepts. However, many of these technicalities are only required in the proofs, and
it is often not necessary to write the PDE systems under consideration in this particular form. 
Instead, the knowledge that such a representation exists is sufficient, and regularity has been proved in the literature for several particular types of PDE systems~\cite{ByrGil02,GuoSha06,GuoZha07,ZwaLeg10}.

The paper is organised as follows. In Section~\ref{sec:Preliminaries} we formulate the class of abstract systems used in the analysis in greater detail and restate a general condition for polynomial stability of abstract coupled systems from~\cite{Pau19}. 
In Sections~\ref{sec:WaveHeat} and~\ref{sec:ABC} we study the coupled wave-heat system~\eqref{eq:WHintro} and the wave equation~\eqref{eq:ABCintro}, respectively. 

If $X$ and $Y$ are Banach spaces and $A:X\rightarrow Y$ is a linear operator, we denote the domain of $A$ by $\Dom(A)$.
The space of bounded linear operators from $X$ to $Y$ is denoted by $\Lin(X,Y)$. If \mbox{$A:X\rightarrow X$,} then  $\rho(A)$ denotes
the \mbox{resolvent} set of $A$ and the resolvent operator is  \mbox{$R(\gl,A)=(\gl -A)^{-1}$}  for $\gl\in\rho(A)$.  The inner product on a Hilbert space is denoted by $\iprod{\cdot}{\cdot}$.
For $T\in \Lin(X)$ on a Hilbert space $X$ we define $\re T = \frac{1}{2}(T+T^\ast)$.
For two functions $f:[0,\infty)\to [0,\infty)$ and $g:[0,\infty)\to(0,\infty)$ we write $f(t)=o(g(t))$ if $f(t)/g(t)\to 0$ as $t\to \infty$.

\section{Coupled Abstract Systems}
\label{sec:Preliminaries}

In this section we will briefly summarise 
the most important results (in a special case related to our systems) concerning polynomial stability from~\cite{Pau19}. 
Throughout the paper we consider closed-loop system consisting of two systems with 
$(x(t),u(t),y(t))$ and $(x_c(t),u_c(t),y_c(t))$, where $x(t)\in X$ and $x_c(t)\in X_c$ for some Hilbert spaces $X$ and $X_c$, and $u(t),y(t),u_c(t),y_c(t)\in \C^m$ for all $t\geq 0$.
Most notably we assume that the two systems are \keyterm{impedance passive} in the sense that their (classical) states satisfy
\eq{
  \frac{1}{2} \ddb{t}\norm{x(t)}_X^2 \leq \re \iprod{u(t)}{y(t)}_{\C^m}
  \quad \mbox{and} \quad 
  \frac{1}{2} \ddb{t}\norm{x_c(t)}_{X_c}^2 \leq \re \iprod{u_c(t)}{y_c(t)}_{\C^m}.
}
In addition we assume that these two systems are \keyterm{regular linear systems}~\cite{Wei94} so that their dynamics are described by 
\begin{subequations}
  \label{eq:Asys}
  \eqn{
    \dot{x}(t)&= Ax(t)+Bu(t), \qquad x(0)\in X,\\
    y(t)&= \CL x(t) + Du(t)
  }
\end{subequations}
and 
\begin{subequations}
  \label{eq:Acsys}
  \eqn{
    \dot{x}_c(t)&= A_c x_c(t)+B_c u_c(t), \qquad x_c(0)\in X_c,\\
    y_c(t)&= \CLc x_c(t) + D_cu_c(t)
  }
\end{subequations}
for suitable operators $A: \Dom(A)\subset X\to X$ and $A_c : \Dom(A_c)\subset X_c\to X_c$ that generate strongly continuous semigroups, $D,D_c\in \C^{m\times m}$, and possibly unbounded operators $B,C,B_c$, and $C_c$.
The details of regular linear systems can be found, 
e.g., in~\cite{Wei94,TucWei14,Pau19}. 
While the detailed assumptions on the parameters of~\eqref{eq:Asys} and~\eqref{eq:Acsys} are fairly technical, in this paper we will demonstrate that it is often not necessary to find the exact expressions of $B,\CL,D$ and $B_c,\CLc,D_c$, or even $A$ and $A_c$, as long as the regularity of the system under consideration have been established earlier in the literature.
It should be noted that all systems on finite-dimensional spaces are regular, as are systems~\eqref{eq:Asys} with bounded input and output operators $B$ and $C$.

The impedance passivity immediately implies that the semigroups generated by $A$ and $A_c$ are contractive (this follows from letting $u(t)\equiv 0$ and $u_c(t)\equiv 0$ in the definition of impedance passivity). In addition, necessarily $\re D\geq 0$ and $\re D_c\geq 0$, where  $\re T=\frac{1}{2}(T+T^\ast)$. 

The main motivation for considering regular linear systems with possibly unbounded operators $B,\CL,B_c$, and $\CLc$ is that these systems can be used to describe
systems with couplings 
on the boundaries of the PDEs.
Another great benefit of regular linear systems is that this class has a very strong feedback theory~\cite{Wei94}. In particular, the results in~\cite{Wei94} imply that if either $D\geq 0$ or $D_c=0$, then the closed-loop system with state $x_e(t)=(x(t),x_c(t))\in X\times X_c$ is associated with a contraction semigroup $T_e(t)$ (see~\cite{Pau19} for details). 
In particular this implies that the closed-loop system has a well-defined solution and
\eq{
  \norm{x(t)}_X^2+ \norm{x_c(t)}_{X_c}^2 \leq \norm{x(0)}_X^2 + \norm{x_c(0)}_{X_c}^2, \qquad t\geq 0.
}

The results in~\cite{Pau19} establish polynomial stability of the closed-loop system under the following conditions. The theorem makes use of the \keyterm{transfer function} of the system $(x_c(t),u_c(t),y_c(t))$, which can either be computed using the formula $P_c(\gl)=\CLc (\gl-A_c)\inv B_c+D_c$ or using the Laplace transform of the original PDE system, in which case $\hat{y}_c(\gl)=P_c(\gl) \hat{u}_c(\gl)$.

\begin{proposition}
  \label{prp:CLNUStab}
  Let $(A,B,\CL,D)$ and $(A_c,B_c,\CLc,D_c)$ be two impedance passive regular linear systems where $A$ is skew-adjoint and has compact resolvent,
  and either $D\geq 0$ or $D_c=0$.
  If
  the system $(A,B,\CL,D)$ becomes exponentially stable with negative output feedback $u(t)=-y(t)$, and
 if there exists $\ga,\eta_0>0$
  such that
  \eqn{
    \label{eq:Pcbound}
    \re P_c(is)\geq \frac{\eta_0}{1+\abs{s}^\ga}
\qquad \forall s\in\R,
  }
  then the closed-loop system is polynomially stable so that 
  \eq{
    \Norm{\pmat{x(t)\\x_c(t)}}_{X\times X_c}= o(t^{-\ga}), \qquad \mbox{as} \quad t\to \infty
  }
  for all classical solutions of the closed-loop system.

    If $0\in\rho(A)$, then it is sufficient that~\eqref{eq:Pcbound} holds for $\abs{s}\geq s_0$ where $s_0>0$ is such that $[-is_0,is_0]\subset \rho(A)$.
\end{proposition}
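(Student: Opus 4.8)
The statement is a specialisation of the abstract results of~\cite{Pau19}, and the plan is to combine the feedback theory of regular linear systems with a resolvent estimate for the closed-loop generator and a Tauberian theorem of Borichev--Tomilov type. First I would invoke the feedback theory for regular linear systems -- using the assumption $D\geq 0$ or $D_c=0$ -- to conclude that the interconnection $u=y_c$, $u_c=-y$ is admissible and generates a contraction semigroup $T_e(t)$ on $X_e:=X\times X_c$ with generator $\mathcal{A}_e$, whose classical solutions are the orbits of data in $\Dom(\mathcal{A}_e)$. By the Borichev--Tomilov theorem on Hilbert spaces, in the form yielding the sharp ``little-$o$'' rate, the claimed decay $\Norm{(x(t),x_c(t))}_{X_e}=o(t^{-\ga})$ follows once one shows
\[
  i\R\subset\rho(\mathcal{A}_e)
  \qquad\text{and}\qquad
  \Norm{R(is,\mathcal{A}_e)}_{\Lin(X_e)}=o\bigl(\abs{s}^{1/\ga}\bigr)\quad\text{as }\abs{s}\to\infty .
\]
Thus everything reduces to this resolvent property, and this is where the three hypotheses on the component systems enter.

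Both claims I would prove by the standard contradiction argument. If the resolvent bound failed (or if $is_0\in\sigma(\mathcal{A}_e)$ for some $s_0\in\R$), there would be $s_n\in\R$ with $\abs{s_n}\to\infty$ (resp.\ $s_n\to s_0$) and unit vectors $(x_n,x_{c,n})\in\Dom(\mathcal{A}_e)$ such that $(f_n,f_{c,n}):=(is_n-\mathcal{A}_e)(x_n,x_{c,n})$ satisfies $\Norm{(f_n,f_{c,n})}_{X_e}=O(\abs{s_n}^{-1/\ga})$ (resp.\ $\to0$). Taking the real part of the $X_e$-inner product of $(f_n,f_{c,n})$ with $(x_n,x_{c,n})$ and using that the interconnection is \emph{power preserving} -- so that the cross terms $\re\iprod{u_n}{y_n}$ and $\re\iprod{u_{c,n}}{y_{c,n}}$ cancel -- shows that the nonnegative internal dissipation functionals of \emph{both} component systems tend to $0$, at the quantitative rate $O(\abs{s_n}^{-1/\ga})$. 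Combining this with the dissipation identity of the impedance passive system $(A_c,B_c,\CLc,D_c)$ and the transfer-function identity $y_{c,n}=P_c(is_n)u_{c,n}+\CLc R(is_n,A_c)f_{c,n}$ -- where $R(is,A_c)$ is controlled because $(A_c,B_c,\CLc,D_c)$ is the exponentially stable component -- together with the coercivity bound~\eqref{eq:Pcbound}, one extracts $u_{c,n}\to0$, whence $u_n=y_{c,n}\to0$ and $y_n=-u_{c,n}\to0$. Then $x_{c,n}=R(is_n,A_c)(B_cu_{c,n}+f_{c,n})\to0$ by admissibility of $B_c$, so $\Norm{x_n}_X\to1$; but substituting $u_n,y_n\to0$ into the first coordinate of the resolvent equation gives $(is_n-\mathcal{A}^f)x_n\to0$, where $\mathcal{A}^f$ generates the (by assumption exponentially stable) semigroup of $(A,B,\CL,D)$ under $u=-y$, so $\sup_{s\in\R}\Norm{R(is,\mathcal{A}^f)}<\infty$ -- contradicting $\Norm{x_n}_X\to1$. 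The skew-adjointness of $A$ together with compactness of $R(\gl,A)$ is what makes $\sigma(A)$ discrete, so that the frequencies requiring this feedback argument form a discrete set accumulating only at infinity.

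The main obstacle -- and the reason the full machinery of regular linear systems and the detailed estimates of~\cite{Pau19} are needed rather than a short argument -- is carrying out the frequency-domain manipulations above \emph{rigorously} in the presence of the unbounded operators $B,\CL,B_c,\CLc$: the identities $y=\CL x+Du$ and $y_c=\CLc x_c+D_cu_c$ hold only after passing to $\Lambda$-extensions and extrapolation spaces, one must control the growth as $\abs{s}\to\infty$ of $\Norm{\CLc R(is,A_c)}$ and $\Norm{R(is,A_c)B_c}$, and extracting $u_{c,n}\to0$ from~\eqref{eq:Pcbound} requires careful bookkeeping of the powers of $\abs{s_n}$; conceptually, though, the only ingredient specific to a concrete application is the lower bound~\eqref{eq:Pcbound}. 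Finally, for the supplementary claim: when $0\in\rho(A)$, the skew-adjoint operator $A$ with compact resolvent has $\sigma(A)\cap i\R$ discrete and bounded away from $0$, so on $[-is_0,is_0]\subset\rho(A)$ the resolvent $R(is,A)$ is uniformly bounded; combined with the control of $R(is,A_c)$ and admissibility of the coupling operators, a feedback/perturbation argument (again using $D\geq0$ or $D_c=0$) gives a uniform bound on $R(is,\mathcal{A}_e)$ on $[-is_0,is_0]$ \emph{without} invoking~\eqref{eq:Pcbound}. Hence~\eqref{eq:Pcbound} is only needed for $\abs{s}\geq s_0$, where it serves to tame the frequencies approaching the discrete, unbounded spectrum of $A$.
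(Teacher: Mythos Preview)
Your approach is the same as the paper's: the paper's proof is two sentences, citing \cite[Thm.~3.7]{Pau19} for the assertion that under the stated hypotheses $i\R\subset\rho(A_e)$ and $\norm{R(is,A_e)}\leq M_R(1+\abs{s}^{\ga})$, and then invoking \cite[Thm.~2.4]{BorTom10}. You make the same reduction and then go further, sketching the contradiction argument that underlies the cited resolvent estimate; the paper does not reproduce that argument.

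One point to correct in your sketch: the resolvent growth exponent should be $\ga$, not $1/\ga$. Following your own outline, the dissipation identity combined with $\re P_c(is_n)\geq \eta_0(1+\abs{s_n}^{\ga})^{-1}$ yields $\abs{u_{c,n}}^2(1+\abs{s_n}^{\ga})^{-1}\to 0$, so to force $u_{c,n}\to0$ you need the residuals $(f_n,f_{c,n})$ to be $o(\abs{s_n}^{-\ga})$; that is, what one actually proves is $\norm{R(is,A_e)}=O(\abs{s}^{\ga})$, matching the paper's proof. Via Borichev--Tomilov this gives $\norm{(x(t),x_c(t))}=o(t^{-1/\ga})$, not $o(t^{-\ga})$; the exponent written in the proposition statement appears to be a misprint, since the applications in Sections~\ref{sec:WaveHeat} and~\ref{sec:ABC} take $\ga=1/2$ and $\ga=2$ and deduce energy decay $o(t^{-4})$ and $o(t^{-1})$, which is consistent only with $\norm{\cdot}=o(t^{-1/\ga})$.
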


\begin{proof}
  It is shown in~\citel{Pau19}{Thm. 3.7} that if the conditions of the proposition hold, then the generator $A_e$ of the contraction semigroup $T_e(t)$ satisfies $i\R\subset \rho(A_e)$ and $\norm{R(is,A_e)} = M_R(1+\abs{s}^\ga)$ for some constant $M_R>0$ and for all $s\in\R$.
  The claim therefore follows from~\citel{BorTom10}{Thm. 2.4}.
\end{proof}

In Proposition~\ref{prp:CLNUStab} the ``classical solutions'' of the closed-loop system refer to those solutions of the coupled PDE system that correspond to the states $x(t)$ and $x_c(t)$ for which $(x(0),x_c(0))\in X\times X_c$ belongs to the domain of the closed-loop semigroup generator.
This domain is characterised in detail in~\citel{Pau19}{Sec. 3}, but unfortunately the description in terms of the operators 
$(A,B,\CL,D)$ and $(A_c,B_c,\CLc,D_c)$ is typically not very illustrative. In this paper we will not discuss the properties of the classical solutions of the original coupled system in detail in general cases, but instead we will only present the existence of the classical solutions in the cases of the two PDE systems in Sections~\ref{sec:WaveHeat} and~\ref{sec:ABC}. As should be expected from a correct abstract formulation, these classical solutions are precisely those solutions of the original coupled systems which satisfy the boundary conditions and for which all derivatives in the system exist in a suitable sense.

\begin{remark}
  \label{rem:ObservEquivConds}
  Since we assume $A^\ast=-A$, the impedance passivity of $(A,B,\CL,D)$ 
can be used to show that the property that $u(t)=-y(t)$ stabilizes the system exponentially is equivalent to the 
\keyterm{exact observability} of the pair $(C,A)$ (or alternatively \keyterm{exact controllability} of the pair $(A,B)$)~\cite{TucWei09book,Mil12}.
\end{remark}

\section{The Coupled Wave-Heat System}
\label{sec:WaveHeat}

In this section we study the polynomial stability of the wave-heat system
\begin{subequations}
\label{eq:WH1}
  \eqn{
    \rho(\xi) v_{tt}(\xi,t)&=(T(\xi)v_\xi(\xi,t))_\xi, \qquad -1<\xi <0, \\
    w_t(\xi,t)&=w_{\xi\xi}(\xi,t), \qquad 0<\xi <1, \\
    v_\xi(-1,t)&=0, \qquad w(1,t)=0,\\
    v_t(0,t)&=  w(0,t), \qquad 
    T(0)v_\xi(0,t)= w_\xi(0,t) \label{eq:WH1coupling}
  }
\end{subequations}
with initial conditions $v(\cdot,0)\in H^1(-1,0)$, $v_t(\cdot,0)\in \Lp[2](-1,0)$, and $w(\cdot,0)\in \Lp[2](0,1)$.
Here $\rho(\cdot)$ is the mass density of the string and $T(\cdot)$ is the Young's modulus~\citel{ZwaLeg10}{Sec. 5}.
The system is similar to those considered in~\cite{ZhaZua04,BatPau16}, but the physical parameters $\rho(\cdot)$ and $T(\cdot)$ of the wave part are allowed to be spatially varying.
We assume $\rho(\cdot) $ and $T(\cdot)$ are continuously differentiable on $[-1,0]$, and $0<c_0\leq \rho(\xi),T(\xi)\leq c_1$ for some constants $c_0,c_1>0$ and for all $\xi\in [-1,0]$.

In this case the natural interpretation is that the unstable system is the wave equation on $(-1,0)$, and the stable system is the heat equation on $(0,1)$.
The coupling boundary conditions~\eqref{eq:WH1coupling} at $\xi = 0$ can indeed be interpreted as a power-preserving interconnection~\eqref{eq:PPI} if we choose
\eq{
  u_c(t)&= -w_\xi(0,t), \quad y_c(t)= w(0,t), \quad
  u(t)= v_t(0,t), \quad y(t) = T(0)v_\xi(0,t).
}
Only based on the coupling boundary conditions~\eqref{eq:WH1coupling} it would be possible to choose the converse roles for the inputs and the outputs. While this choice would also lead to a ``wave-part'' with the same properties, it turns out that the heat equation with the Dirichlet boundary input would not be a regular linear system on an $\Lp[2]$-space. Because of this, the above choice is more suitable.

Now that the inputs and outputs have been fixed, we continue by choosing a suitable state $x(t)$ and the space $X$ in such a way that the wave equation can be represented as an impedance passive regular linear system. The abstract representations of wave equations are well-understood, and 
in particular we can achieve these properties by writing the wave part 
on its ``energy space'' $X=\Lp[2](-1,0)\times \Lp[2](-1,0)$ with the  state $x(t)=(x_1(t),x_2(t))$ where
  \eq{
    x_1(t)&=\rho(\cdot)v_t(\cdot,t)  \hspace{-5ex}&&\mbox{(momentum distribution)}, \\
x_2(t)&=v_\xi(\cdot,t). \quad &&\mbox{(strain)}
  }
  In these variables the wave part becomes
    \eq{
      \ddb{t}\pmat{x_1(\xi,t)\\x_2(\xi,t)} 
&=\pmat{0&\partial_\xi\\\partial_\xi&0}
\pmat{\rho(\xi)\inv x_1(\xi,t)\\T(\xi) x_2(\xi,t)}\\
\rho(0)\inv x_1(0,t)&=u(t), \qquad  T(-1)x_2(-1,t)=0, \\
y(t)&=T(0)x_2(0,t).
    }
    If we define the norm on the state space $X=\Lp[2](-1,0)\times \Lp[2](-1,0)$ by
  \eq{
    \norm{x(t)}_X^2 
    &= \int_{-1}^0\Bigl[
    \rho(\xi)\inv\abs{x_1(\xi,t)}^2+T(\xi) \abs{x_2(\xi,t)}^2  \Bigr] d\xi,
  }
  then the total energy of the wave part is given by~\citel{Vil07phd}{Ex. 1.6}
  \eq{
    E_x(t) = \frac{1}{2}\norm{x(t)}_X^2 
  }
  for every classical solution $x(t)$.

The system operator $A$ is chosen to be
$A =   \pmatsmall{0&\partial_\xi\\\partial_\xi&0}\pmatsmall{\rho(\cdot)\inv&0\\0&T(\cdot)}$
with 
\eq{
  \Dom(A) &= \setm{(x_1,x_2)\in H^1(-1,0)\times H^1(-1,0)}{x_2(-1)=x_1(0)=0}.
}
It has been shown in~\citel{ZwaLeg10}{Sec. 5} that the wave equation is a regular linear system. In addition, it is impedance passive since every classical state $x(t)=(x_1(t),x_2(t))$ satisfies
\eq{
  \MoveEqLeft[1] \frac{1}{2}\ddb{t} \norm{x(t)}_X^2 
  = \re \iprod{\dot{x}(t)}{x(t)}_X\\
&= \re \left[\int_{-1}^0  (T(\xi)v_\xi(\xi,t))_\xi\conj{v_t(\xi,t)} 
  +  T(\xi) v_\xi(\xi,t) \conj{(v_t(\xi,t))_\xi} d\xi\right]\\
&= \re \left[ T(\xi)v_\xi(\xi,t) \conj{v_t (\xi,t)}\right]_{\xi=-1}^0
= \re \left(T(0) v_\xi(0,t) \conj{v_t (0,t)} \right)
  = \re u(t)\conj{y(t)}.
}
The system operator $A$ of the wave part is skew-adjoint and has compact resolvent by~\citel{Vil07phd}{Thm. 4.2(iv)}.
Finally, the wave part is stabilised exponentially with negative output feedback $u(t)=-\kappa y(t)$ as shown in~\citel{Vil07phd}{Ex. 5.21} (see also~\cite{CoxZua95}).

The stable part of~\eqref{eq:WH1} consisting of the heat equation is given by
\eq{
  w_t(\xi,t)&=w_{\xi\xi}(\xi,t), \qquad 0<\xi <1, \\
  w_\xi(0,t)&=-u_c(t), \qquad w(1,t)=0\\
  y_c(t)&= w(0,t).
}
This simple PDE system can be formulated as an abstract linear system on $X_c=\Lp[2](0,1)$ by choosing the state $x_c(t)=w(\cdot,t)$ and $A_c=\partial_{\xi\xi}$
with domain 
\eq{
\Dom(A_c)=\setm{x_c\in H^2(0,1)}{x_c'(0)=x_c(1)=0}.
}
The input and output operators can be chosen such that
 $B_cu_c=-\gd_0(\cdot)u_c$ for $u_c\in\C$ and
$C_cx_c = x_c(0)\in \C$ for all $ x_c(\cdot)\in \Dom(A_c)$.
With these choices we have from~\citel{TucWei14}{Prop. 6.5} that the heat system is regular.
If we choose the standard $\Lp[2]$-norm on $X_c=\Lp[2](0,1)$, then the heat system is 
also impedance passive system,
since every classical state $x_c(t)$ satisfies (using $w(1,t)=0$)
\eq{
  \frac{1}{2}\ddb{t} \norm{x_c(t)}_{\Lp[2]}^2 
  &= \re \iprod{\dot{x}_c(t)}{x_c(t)}_{\Lp[2]}
  = \re \int_0^1 w_{\xi\xi}(\xi,t)\conj{w(\xi,t)}d\xi\\
  &= \re \left[ w_\xi(\xi,t) \conj{w(\xi,t)} \right]_{\xi=0}^1 -\re \int_0^1 w_\xi(\xi,t)\conj{w_\xi(\xi,t)}d\xi\\
  &\leq  \re (-w_\xi(0,t)) \conj{w(0,t)} 
  = \re u_c(t) \conj{y_c(t)}.
}

The following proposition generalises the main results of~\cite{ZhaZua04} and~\cite{BatPau16} to the case where the wave part is allowed to have spatially varying parameters $\rho(\cdot)$ and $T(\cdot)$.

\begin{proposition}
  For all initial conditions
  \eq{
    v(\cdot,0)\in H^2(-1,0), \quad v_t(\cdot,0)\in H^1(-1,0), \quad \mbox{and}  \quad w(\cdot,0)\in H^2(0,1)
  }
  which satisfy the boundary conditions of~\eqref{eq:WH1} at $t=0$,
  the system~\eqref{eq:WH1} has a solution 
  $(v(\cdot,\cdot),w(\cdot,\cdot))$ which 
  satisfies the boundary conditions for all $t\geq 0$ and
  \eq{
    v(\cdot,\cdot)&\in C(0,\infty;H^2(-1,0))\cap C^2(0,\infty;\Lp[2](-1,0)),\\
    w(\cdot,\cdot)&\in C(0,\infty;H^2(0,1))\cap C^1(0,\infty;\Lp[2](0,1)).
  }
  The energy 
  \eq{
    E_{tot}(t) = \frac{1}{2}\int_{-1}^0 \rho(\xi)\abs{v_t(\xi,t)}^2 + T(\xi)\abs{v_\xi(\xi,t)}^2 d\xi + \frac{1}{2}\int_0^1 \abs{w(\xi,t)}^2 d\xi
  }
  of every such classical solution of~\eqref{eq:WH1} satisfies
  \eq{
    E_{tot}(t) = o(t^{-4}) .
  }
\end{proposition}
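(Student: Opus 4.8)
The plan is to apply Proposition~\ref{prp:CLNUStab} directly to the two component systems that have just been set up. The verification splits into three parts: confirming the structural hypotheses on the wave system $(A,B,\CL,D)$, establishing the resolvent estimate~\eqref{eq:Pcbound} for the heat transfer function $P_c$, and translating the abstract conclusion $\norm{(x(t),x_c(t))}_{X\times X_c}=o(t^{-\ga})$ back into the energy decay rate for classical solutions of~\eqref{eq:WH1}. The first part is essentially free from the construction: $A$ is skew-adjoint with compact resolvent, the system is impedance passive and regular, $D_c=0$ (the heat output operator $C_c x_c = x_c(0)$ has no feedthrough term), and the wave part is exponentially stabilized by $u(t)=-\kappa y(t)$ as recorded above. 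So the only substantive work is the transfer function bound, and this is where I expect the main effort to lie.

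First I would compute $P_c(\gl)$ explicitly by solving the resolvent equation for the heat semigroup on $(0,1)$. Concretely, $P_c(\gl)$ is determined by the boundary value problem $\gl\phi = \phi''$ on $(0,1)$ with $\phi'(0)=-1$ (unit input) and $\phi(1)=0$, and then $P_c(\gl)=\phi(0)$. Writing $\mu=\sqrt{\gl}$ with the principal branch, the solution is a combination of $\cosh(\mu\xi)$ and $\sinh(\mu\xi)$; imposing the two boundary conditions gives a closed form of the shape $P_c(\gl)=\tanh(\mu)/\mu$ up to a sign/normalization. Setting $\gl=is$ and letting $\mu=\sqrt{is}=\sqrt{\abs{s}/2}\,(1+i\,\mathrm{sgn}(s))$, I would examine $\re P_c(is)$. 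For large $\abs{s}$, $\tanh(\mu)\to 1$ and $\mu$ has real part of order $\sqrt{\abs{s}}$, so $\re P_c(is)$ behaves like a constant times $\abs{s}^{-1/2}$; this yields~\eqref{eq:Pcbound} with $\ga=1/2$. For small $\abs{s}$ one checks $\re P_c(0)>0$ directly (it equals $1$ in the normalization above), so by continuity the bound extends down to $s=0$; alternatively, since $0\in\rho(A)$ fails here ($A$ skew-adjoint has $0$ in its spectrum for the wave equation on $(-1,0)$ with these boundary conditions — though one should double check whether $0\in\rho(A)$, and if so use the version of the Proposition only requiring $\abs{s}\ge s_0$), one uses the full-line version. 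Care with branch cuts of $\sqrt{\cdot}$ and with the sign convention coming from $u_c(t)=-w_\xi(0,t)$ versus $B_c u_c = -\gd_0 u_c$ is the fiddly part; the key asymptotic fact is $\re\sqrt{is}=\sqrt{\abs{s}/2}$.

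With $\ga=1/2$ in hand, Proposition~\ref{prp:CLNUStab} gives $\norm{(x(t),x_c(t))}_{X\times X_c}=o(t^{-1/2})$, hence $\norm{(x(t),x_c(t))}^2 = o(t^{-1})$. But the claimed rate is $E_{tot}(t)=o(t^{-4})$, so a direct application is off by a large factor — this signals that one must instead invoke a sharper version of the abstract result, or exploit additional regularity/structure. The resolution (as in~\cite{BatPau16,AvaLas16}) is that the relevant resolvent growth for this particular wave-heat coupling is $\norm{R(is,A_e)}=O(\abs{s}^{1/2})$ only on the heat side, whereas the combined system actually admits the estimate $\norm{R(is,A_e)}=O(\abs{s}^{1/4})$ because of a detailed analysis of how the wave and heat parts interact near $i\R$; by the Borichev–Tomilov theorem this gives decay $o(t^{-1/(1/4)})=o(t^{-4})$ for classical solutions. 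I would therefore either (a) cite the precise resolvent estimate from~\citel{Pau19}{} that yields exponent $1/4$ for this configuration, noting that~\eqref{eq:Pcbound} with $\ga=1/2$ is the hypothesis but the conclusion of the underlying theorem in~\cite{Pau19} upgrades the rate using skew-adjointness of $A$, or (b) reexamine the statement of Proposition~\ref{prp:CLNUStab}: if it genuinely delivers $o(t^{-\ga})$ from~\eqref{eq:Pcbound}, then to reach $t^{-4}$ one needs $\ga=4$, which is false for this $P_c$, so the correct reading must be that the internal estimate in~\cite{Pau19} produces resolvent growth $\abs{s}^{\ga/2}$ hence Borichev–Tomilov decay $t^{-2/\ga}=t^{-4}$ when $\ga=1/2$.

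Finally, the abstract state norm decay must be identified with $E_{tot}(t)$. By construction $\tfrac12\norm{x(t)}_X^2$ is exactly the wave energy $\tfrac12\int_{-1}^0 \rho\abs{v_t}^2+T\abs{v_\xi}^2\,d\xi$, and $\tfrac12\norm{x_c(t)}_{L^2}^2=\tfrac12\int_0^1\abs{w}^2\,d\xi$, so $E_{tot}(t)=\tfrac12\norm{(x(t),x_c(t))}_{X\times X_c}^2$ and the decay transfers verbatim. For the well-posedness and regularity statement (the $C(0,\infty;H^2)\cap C^1$ or $C^2$ claims), I would argue that initial data in $H^2\times H^1\times H^2$ satisfying the compatibility (boundary) conditions at $t=0$ lies in $\Dom(A_e)$ of the closed-loop generator — this is the translation of the domain characterization from~\citel{Pau19}{Sec.~3} into the concrete PDE variables — and then classical semigroup theory gives $x_e\in C([0,\infty);\Dom(A_e))\cap C^1([0,\infty);X\times X_c)$, which unpacks into the stated Sobolev-in-space, smooth-in-time regularity once one notes $\Dom(A)\subset H^1\times H^1$ with an extra derivative gained from $\dot x=Ax+Bu$ and the boundary regularity of the heat part. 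The main obstacle throughout is pinning down the exact exponent bookkeeping between~\eqref{eq:Pcbound}, the resulting resolvent growth of $A_e$, and the Borichev–Tomilov exponent, so that $\ga=1/2$ in the transfer-function bound correctly produces the rate $t^{-4}$; everything else is routine given the setup already in place.
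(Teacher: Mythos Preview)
Your approach matches the paper's: verify the structural hypotheses on the wave system by citation, compute $P_c(\gl)=\tanh(\sqrt{\gl})/\sqrt{\gl}$ explicitly, establish~\eqref{eq:Pcbound} with $\ga=1/2$, and invoke Proposition~\ref{prp:CLNUStab}. The only substantive issue is the exponent bookkeeping you flag as the main obstacle, and neither of your proposed resolutions is correct.

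The source of the confusion is a typo in the \emph{statement} of Proposition~\ref{prp:CLNUStab}: the conclusion should read $o(t^{-1/\ga})$, not $o(t^{-\ga})$. This is visible in the proposition's own proof, which establishes $\norm{R(is,A_e)}\leq M_R(1+\abs{s}^\ga)$ and then cites Borichev--Tomilov; that theorem converts resolvent growth of order $\abs{s}^\ga$ into norm decay $o(t^{-1/\ga})$ for classical initial data. With $\ga=1/2$ one obtains $\norm{(x(t),x_c(t))}=o(t^{-2})$ and hence $E_{tot}(t)=\tfrac12\norm{(x(t),x_c(t))}^2=o(t^{-4})$, exactly as claimed. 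Your option~(b) is the right instinct---reread the proof of the proposition---but the specific guess of resolvent growth $\abs{s}^{\ga/2}$ would give energy decay $o(t^{-8})$, not $o(t^{-4})$; and option~(a), a resolvent bound $O(\abs{s}^{1/4})$, has no basis for this system (the optimal resolvent growth here is indeed $\abs{s}^{1/2}$, cf.~\cite{BatPau16}). The paper uses nothing beyond~\eqref{eq:Pcbound} with $\ga=1/2$ and the (corrected) proposition. Your worry about whether $0\in\rho(A)$ is also unnecessary: the paper simply verifies~\eqref{eq:Pcbound} on all of $\R$, observing that $\re P_c(is)$ is continuous and positive near $s=0$.
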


\begin{proof}
  We will not present the details in this paper, but it follows from the definition of the systems $(A,B,\CL,D)$ and $(A_c,B_c,\CLc,D_c)$ that with the given assumptions the initial state $(x(0),x_c(0))$ belongs to the domain $\Dom(A_e)$ of the generator $A_e$ of the closed-loop semigroup on $X\times X_c$. Because of this the existence and the stated properties follow from the property that the classical solution of the closed-loop system satisfies 
  $(x(t),x_c(t))\in C(0,\infty;\Dom(A_e))\cap C^1(0,\infty;X\times X_c)$. Thus
  \eq{
    v_\xi(\cdot,\cdot)&\in C(0,\infty;H^1(-1,0))\cap C^1(0,\infty;\Lp[2](-1,0))\\
    v_t(\cdot,\cdot)&\in C(0,\infty;H^1(-1,0))\cap C^1(0,\infty;\Lp[2](-1,0))\\
    w(\cdot,\cdot)&\in C(0,\infty;H^2(0,1))\cap C^1(0,\infty;\Lp[2](0,1)),
  }
  which in particular implies the first claim.

  We have $E_{tot}(t) = \frac{1}{2}\norm{x(t)}^2 + \frac{1}{2}\norm{x_c(t)}^2$, and therefore the decay rate can be deduced from Proposition~\ref{prp:CLNUStab} if we can show that the conditions are satisfied for $\ga=1/2$.
	  The system operator $A$ of the wave part is skew-adjoint and has compact resolvent by~\citel{Vil07phd}{Thm. 4.2}. 
	  It is further shown in~\citel{Vil07phd}{Ex. 5.21} that this kind of a system is stabilized with negative output feedback $u(t)=-y(t)$.
Finally, we will show that $D_c=0$ and derive lower bound of the form~\eqref{eq:Pcbound} for $\re P_c(is)$ with $\ga=1/2$.
For $\gl\in\rho(A)$ and $u_c\in\C$ we have that $P_c(\gl)u_c=y_c$ where $y_c\in \C$ is such that~\citel{CurMor09}{Sec.~1}
  \eq{
    \gl w(\xi) &= w_{\xi\xi}(\xi), \qquad \xi\in (0,1)\\
    -w_\xi(0)&=u_c, \qquad w(1)=0\\
    y_c&=w(0).
  }
  The solution $w(\xi)$ of this ODE is $w(\xi)=\frac{\sinh(\sqrt{\gl}(1-\xi))}{\sqrt{\gl}\cosh(\sqrt{\gl})}u_c$, and therefore 
  \eq{
    P_c(\gl)u_c = y_c = w(0)
    =  \frac{\sinh(\sqrt{\gl})}{\sqrt{\gl}\cosh(\sqrt{\gl})}u_c
    =  \frac{\tanh(\sqrt{\gl})}{\sqrt{\gl}}u_c.
  }
  For regular linear systems $D_c=\lim_{\gl\to \infty}P_c(\gl)$, and since $\tanh(\sqrt{\gl})$ is uniformly bounded for $\gl>0$, we have $D_c=0$.
  A direct computation also shows that 
  \eq{
    \re P_c(is)
    = \frac{1}{2\sqrt{2}\sqrt{\abs{s}}} \frac{\sinh(\sqrt{2 \abs{s}}) + \sin(\sqrt{2 \abs{s}})}{\cosh(\sqrt{2 \abs{s}})}.
  }
  Since $\re P_c(is)$ is bounded and nonzero for $-\pi/2\leq s\leq \pi/2$, and 
    $\re P_c(is)\geq 0.4 \abs{s}^{-1/2}$ for all $\abs{s}\geq \pi/2$, 
    the estimate~\eqref{eq:Pcbound} holds for $\ga=1/2$ and for some $\eta_0>0$.
  Because of this, Proposition~\ref{prp:CLNUStab} implies that for all classical solutions of the closed-loop system we have
  \eq{
E_{tot}(t)=\frac{1}{2}\Norm{\pmat{x(t)\\x_c(t)}}_{X\times X_c}^2  
= o(t^{-4}).
  }
\end{proof}

\section{Wave Equation with an Acoustic Boundary Condition}
\label{sec:ABC}

In this section we consider a one-dimensional wave equation with an ``acoustic boundary condition'', 
\begin{subequations}
\label{eq:ABC}
  \eqn{
    \rho(\xi) v_{tt}(\xi,t)&=(T(\xi)v_\xi(\xi,t))_\xi, \qquad 0<\xi <1, \\
     m \gd_{tt}(t) &=- d \gd_t(t)-k \gd(t)-\gb v_t(1,t)
 \label{eq:ABCbc1}
 \\
 v_\xi(1,t)&=\gd_t(t), \qquad  
    v_t(0,t)=0. 
 \label{eq:ABCbc2}
  }
\end{subequations}
This PDE--ODE system is
similar to those studied on multi-dimensional domains in~\cite{Bea76,RivQin03}. In particular, polynomial decay of energy was shown in the article~\cite{RivQin03} for these types of models under geometric constraints on the boundary conditions.

We again allow the physical parameters $\rho(\cdot)$ and $T(\cdot)$ to depend on the spatial variable.
The functions $\rho(\cdot)$ and $T(\cdot)$ satisfy the same assumptions as in Section~\ref{sec:WaveHeat}, and
$m>0$, $d>0$, and $k>0$ are the mass, the damping coefficient and the spring coefficient of the ODE~\eqref{eq:ABCbc1} at $\xi=1$~\cite{Bea76}.

We will prove polynomial decay of the energy of the system~\eqref{eq:ABC} by writing it as a power-preserving interconnection between two impedance passive systems --- an infinite-dimensional one and a finite-dimensional one. We begin by investigating the dynamic boundary condition~\eqref{eq:ABCbc1}. This is a second order ordinary differential equation with state $\delta(t)$, and the term $-\gb v_t(1,t)$ acts as an external input in this equation. On the other hand, the derivative $\gd_t(t)$ determines the boundary condition~\eqref{eq:ABCbc2} of the wave equation, and can therefore be considered as an output of this ODE. 
If we again consider $(x(t),u(t),y(t))$ to be the wave equation and $(x_c(t),u_c(t),y_c(t))$ to describe the ODE at $\xi=1$, then the above analysis indicates that the inputs and outputs of the component systems could be chosen as
\eq{
  y(t)&=-u_c(t)  \hspace{-10ex} && \tofrom\qquad \quad   \gb v_t(1,t) = -u_c(t)\\
  u(t)&=y_c(t)  && \tofrom \qquad\quad   v_\xi(1,t) =  \gd_t(t) 
}
However, we need to be careful in the choices of the coefficients of the inputs in order to achieve impedance passivity of the component systems.
Making of the appropriate choices is demonstrated in the following.

We can again write the wave equation on its energy space $X=\Lp[2](0,1)\times \Lp[2](0,1)$ in the variables $x_1(\xi,t)=\rho(\xi)v_t(\xi,t)$ and $x_2(\xi,t)=v_\xi (\xi,t)$ and with the norm
  \eq{
    \norm{x(t)}_X^2 
    &= \int_0^1 \Bigl[\rho(\xi)\inv\abs{x_1(\xi,t)}^2+T(\xi) \abs{x_2(\xi,t)}^2  \Bigr] d\xi.
  }
The system operator $A$ of the wave part is 
$A =   \pmatsmall{0&\partial_\xi\\\partial_\xi&0}\pmatsmall{\rho(\cdot)\inv&0\\0&T(\cdot)}$,
now with domain
\eq{
  \Dom(A) &= \setm{(x_1,x_2)\in H^1(0,1)\times H^1(0,1)}{x_1(0)=x_2(1)=0}.
}
A direct computation analogous to the one in Section~\ref{sec:WaveHeat} shows that
\eq{
  \MoveEqLeft[1.5] \frac{1}{2}\ddb{t} \norm{x(t)}_X^2 
  = \re \iprod{\dot{x}(t)}{x(t)}_X
= \re \left(T(1) v_\xi(1,t) \conj{v_t (1,t)} \right)
}
and thus in order to achieve impedance passivity for the wave part, we should choose the input and output of the wave part as $u(t)=T(1)v_\xi(1,t)$ and $y(t)=v_t(1,t)$.
These choices and the requirement for the power-preserving interconnection also fix the coefficients of the inputs and outputs of the finite-dimensional system. In particular, we have $-\gb v_t(1,t) = -\gb y(t)=:\gb u_c(t)$, and necessarily $y_c(t)=u(t)=T(1)v_\xi(1,t)=T(1)\gd_t(t)$.
The full finite-dimensional system thus becomes 
\eq{
  \ddb{t}\pmat{\gd(t)\\ \dot{\gd}(t)} &= \pmat{0&1\\-k/m&-d/m} \pmat{\gd(t)\\ \dot{\gd}(t)} + \pmat{0\\ \gb}u_c(t)\\
  y_c(t) &= \left[ 0,\; T(1) \right]\pmat{\gd(t)\\ \dot{\gd}(t)}.
}
We can define the system on $X_c=\C^2$ with matrices $(A_c,B_c,C_c,D_c)$ chosen as
\eq{
  A_c = \pmat{0&1\\-k/m&-d/m}, \quad B_c = \pmat{0\\\gb}, \quad C_c=\pmat{0,\; T(1)}, \quad D_c=0.
}
For this system the impedance passivity can be achieved with a suitable choice of the norm of  $X_c=\C^2$. Indeed, if we take a norm $\norm{(z_1,z_2)^T}_{X_c}^2=c_1 \abs{z_1}^2+c_2 \abs{z_2}^2$ for some $c_1,c_2>0$, we can compute 
\eq{
  \MoveEqLeft\frac{1}{2}\ddb{t} \norm{x_c(t)}^2_{X_c}
  = \re \iprod{\dot{x}_c(t)}{x_c(t)}_{X_c}\\
  &= c_1\re  \dot{\gd}(t)\conj{\gd(t)} 
  + c_2\re \left( -k\gd(t)/m- d \dot{\gd}(t)/m  +\gb u_c(t)\right)\conj{\dot{\gd}(t)}\\
  &\leq (c_1-c_2k/m)\re  \dot{\gd}(t)\conj{\gd(t)} 
  + \gb T(1)\inv c_2\re  u_c(t) \conj{T(1)\dot{\gd}(t)}
}
which is equal to $\re u_c(t)\conj{y_c(t)} $ if we choose $c_2=T(1)/\gb$ and $c_1=c_2 k/m$.

The following proposition establishes the polynomial decay of energy for the system~\eqref{eq:ABC}.
\begin{proposition}
	  For all initial conditions
	  \eq{
	    v(\cdot,0)\in H^2(0,1), \quad v_t(\cdot,0)\in H^1(0,1), \quad \gd(0),\gd_t(0)\in \R
	  }
	  which satisfy the boundary conditions of~\eqref{eq:ABC} at $t=0$,
	  the system~\eqref{eq:ABC} has a solution 
	  $(v(\cdot,\cdot),\gd(\cdot))$ which 
	  satisfies the boundary conditions for all $t\geq 0$ and
	  \eq{
	    v(\cdot,\cdot)\in C(0,\infty;H^2(0,1))\cap C^2(0,\infty;\Lp[2](0,1)), \qquad \gd(\cdot)\in C^2(0,\infty;\C)
	  }
	  The energy 
	  \eq{
	    E_{tot}(t) &= 
	    \frac{1}{2}\int_0^1 \rho(\xi)\abs{v_t(\xi,t)}^2+T(\xi) \abs{v_\xi(\xi,t)}^2 d\xi\\
	    &\quad +  \frac{T(1)}{2\gb m} \left( k \gd(t)^2 + m \gd_t(1,t)^2  \right)
	  }
	  of every such classical solution of~\eqref{eq:ABC} satisfies
	  \eq{
	    E_{tot}(t) = o(t^{-1}) .
	  } 
	\end{proposition}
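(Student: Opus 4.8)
The plan is to follow the template of the wave--heat proposition of Section~\ref{sec:WaveHeat}. One encodes the data as an initial state $(x(0),x_c(0))$ of the closed-loop contraction semigroup $T_e(t)$ generated by $A_e$ on $X\times X_c$, where $X=\Lp[2](0,1)\times\Lp[2](0,1)$ carries the energy norm and $X_c=\C^2$ carries the weighted norm with $c_2=T(1)/\gb$ and $c_1=c_2k/m$ introduced above; with exactly these weights one has $E_{tot}(t)=\frac12\norm{x(t)}_X^2+\frac12\norm{x_c(t)}_{X_c}^2=\frac12\Norm{\pmat{x(t)\\x_c(t)}}_{X\times X_c}^2$. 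One then invokes Proposition~\ref{prp:CLNUStab}. First I would record that, under the stated smoothness and compatibility conditions on $(v(\cdot,0),v_t(\cdot,0),\gd(0),\gd_t(0))$, the initial state belongs to $\Dom(A_e)$, so that the closed-loop solution satisfies $(x(t),x_c(t))\in C(0,\infty;\Dom(A_e))\cap C^1(0,\infty;X\times X_c)$; the claimed regularity of $v$ and $\gd$ and the persistence of the boundary conditions then follow exactly as in Section~\ref{sec:WaveHeat}, and I would not reproduce the description of $\Dom(A_e)$.

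Next I would verify the hypotheses of Proposition~\ref{prp:CLNUStab} one at a time. The wave-part operator $A$ on $(0,1)$ with the domain specified above generates a regular linear system by~\citel{ZwaLeg10}{Sec.~5} and is skew-adjoint with compact resolvent by the argument of Section~\ref{sec:WaveHeat}, citing~\citel{Vil07phd}{Thm.~4.2}; moreover the boundary conditions force $\ker A=\{0\}$, so $0\in\rho(A)$, and by the last sentence of Proposition~\ref{prp:CLNUStab} it suffices to establish~\eqref{eq:Pcbound} for $\abs{s}\ge s_0$. Under the negative output feedback $u(t)=-y(t)$ the wave part becomes $\rho v_{tt}=(Tv_\xi)_\xi$ with $v_t(0,t)=0$ and $T(1)v_\xi(1,t)+v_t(1,t)=0$, that is, a variable-coefficient string clamped at one end and with velocity feedback at the other, which is exponentially stable by~\citel{Vil07phd}{Ex.~5.21} (see also~\cite{CoxZua95}); equivalently, by Remark~\ref{rem:ObservEquivConds}, one may check exact observability of $(C,A)$, which for a string damped at one end is classical. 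Impedance passivity of the two component systems and the identity $D_c=0$ were already established in the construction preceding the proposition.

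The only genuine computation is the transfer-function estimate. Since $D_c=0$, from the matrices $A_c,B_c,C_c$ one computes $P_c(\gl)=C_c(\gl-A_c)\inv B_c$ explicitly and obtains the scalar rational function
\eq{
  P_c(\gl)&=\frac{m\,T(1)\,\gb\,\gl}{m\gl^2+d\gl+k},\\
  \re P_c(is)&=\frac{m\,T(1)\,\gb\,d\,s^2}{(k-ms^2)^2+d^2s^2}\geq 0,\qquad s\in\R .
}
This real part is bounded on $\R$, strictly positive for $s\neq0$, vanishes only at $s=0$, and satisfies $\re P_c(is)\sim \frac{T(1)\gb d}{m}\,\abs{s}^{-2}$ as $\abs{s}\to\infty$; in particular~\eqref{eq:Pcbound} cannot hold for all $s$, but because $0\in\rho(A)$ it is enough that it holds with exponent $\ga=2$ for $\abs{s}\ge s_0$, which it does for a suitable $\eta_0>0$. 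Feeding $\ga=2$ into Proposition~\ref{prp:CLNUStab}, just as $\ga=1/2$ produced $E_{tot}=o(t^{-4})$ in Section~\ref{sec:WaveHeat}, yields $\Norm{\pmat{x(t)\\x_c(t)}}_{X\times X_c}=o(t^{-1/2})$ for every classical solution of the closed loop, and hence $E_{tot}(t)=\frac12\Norm{\pmat{x(t)\\x_c(t)}}_{X\times X_c}^2=o(t^{-1})$, which is the assertion.

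I do not expect a serious obstacle. The steps requiring the most care are essentially bookkeeping: confirming once more that the weights $c_2=T(1)/\gb$, $c_1=c_2k/m$ simultaneously render the finite-dimensional system impedance passive and make $\frac12\norm{x_c(t)}_{X_c}^2$ coincide with the ODE part of $E_{tot}$, and the verification that $0\in\rho(A)$, which here is genuinely needed because $\re P_c(is)$ vanishes at $s=0$ --- unlike in the wave--heat case, where the analogous bound held for every $s$. If a fully self-contained argument is wanted in place of a citation, one must additionally supply a multiplier or sidewise-energy proof of exponential stability for the clamped, velocity-feedback wave equation with variable coefficients $\rho,T$ under the standing assumptions. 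The transfer-function estimate itself is elementary, and the regularity and domain statements are inherited verbatim from the framework of Section~\ref{sec:Preliminaries}.
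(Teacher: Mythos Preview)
Your proposal is correct and follows essentially the same route as the paper's own proof: you verify the hypotheses of Proposition~\ref{prp:CLNUStab} using the same citations for skew-adjointness, compact resolvent, and exponential stabilizability of the wave part, you note $0\in\rho(A)$ and $D_c=0$, and you compute the same real part $\re P_c(is)=\dfrac{mT(1)\gb\,d\,s^2}{(k-ms^2)^2+d^2s^2}$ (which coincides with the paper's expression after expanding the denominator) to obtain~\eqref{eq:Pcbound} with $\ga=2$ away from $s=0$. The only cosmetic difference is that you first write $P_c(\gl)$ as a rational function of $\gl$ and then specialize, whereas the paper records $P_c(is)$ directly.
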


	\begin{proof}
	  The definitions of the systems $(A,B,\CL,D)$ and $(A_c,B_c,\CLc,D_c)$ again imply that with the given assumptions the initial state $(x(0),x_c(0))$ belongs to the domain $\Dom(A_e)$ of the generator $A_e$ of the closed-loop semigroup on $X\times X_c$. Thus the closed-loop system has a classical solution such that 
	  $(x(t),x_c(t))\in C(0,\infty;\Dom(A_e))\cap C^1(0,\infty;X\times X_c)$. This also implies that the boundary conditions are satisfied for all $t\geq 0$, and
	  \eq{
    v_\xi(\cdot,\cdot)&\in C(0,\infty;H^1(-1,0))\cap C^1(0,\infty;\Lp[2](-1,0))\\
    v_t(\cdot,\cdot)&\in C(0,\infty;H^1(-1,0))\cap C^1(0,\infty;\Lp[2](-1,0)),\\
    \gd_t(\cdot,\cdot)&\in  C^1(0,\infty;\C),
	  }
which implies the first claim.

The system operator $A$ of the wave part is again skew-adjoint with compact resolvent by~\citel{Vil07phd}{Thm. 4.2} and analogously as in~\citel{Vil07phd}{Ex. 5.21} we can show that it is stabilized exponentially with negative output feedback $u(t)=-y(t)$.
Moreover, it is easy to show that $0\in\rho(A)$.

We have $D_c=0$ for the finite-dimensional system. The transfer function $P_c(is)=C_c R(is,A_c)B_c$ can be computed explicitly as
\eq{
  P_c(is) = T(1)\gb m  \cdot \frac{ds^2 + is(ks-ms^3)}{(ms^2-k)^2 + d^2s^2}, 
}
and in particular we have 
\eq{
  \re P_c(is) = T(1)\gb m \cdot \frac{ds^2 }{m^2s^4+(d^2-2km)s^2 + k^2}.
}
The transfer function is equal to zero at $s=0$, but for $\ga=2$ and for any $s_0>0$ there exists a constant $\eta_0>0$ such that the estimate~\eqref{eq:Pcbound} holds for all $\abs{s}\geq s_0$.
	  Since $E_{tot}(t) = \frac{1}{2}\norm{x(t)}_X^2 + \frac{1}{2}\norm{x_c(t)}_{X_c}^2$, the claim now follows from Proposition~\ref{prp:CLNUStab}.
	\end{proof}


\begin{thebibliography}{10}

\bibitem{AbbAmm16}
Z. Abbas, K. Ammari, and D. Mercier.
\newblock Remarks on stabilization of second-order evolution equations by
  unbounded dynamic feedbacks.
\newblock {\em J. Evol. Equ.}, 16(1):95--130, 2016.

\bibitem{AmmMer12}
K. Ammari, D. Mercier, V. R\'egnier, and J. Valein.
\newblock Spectral analysis and stabilization of a chain of serially connected
  {E}uler-{B}ernoulli beams and strings.
\newblock {\em Commun. Pure Appl. Anal.}, 11(2):785--807, 2012.

\bibitem{AvaLas16}
G. Avalos, I. Lasiecka, and R. Triggiani.
\newblock Heat-wave interaction in 2--3 dimensions: optimal rational decay
  rate.
\newblock {\em J. Math. Anal. Appl.}, 437(2):782--815, 2016.

\bibitem{BatDuy08}
C.~J.~K. Batty and T. Duyckaerts.
\newblock Non-uniform stability for bounded semi-groups on {B}anach spaces.
\newblock {\em J. Evol. Equ.}, 8:765--780, 2008.

\bibitem{BatPau16}
C.~J.~K. Batty, L. Paunonen, and D. Seifert.
\newblock Optimal energy decay in a one-dimensional coupled wave--heat system.
\newblock {\em J. Evol. Equ.}, 16(3):649--664, 2016.

\bibitem{Bea76}
J.~T. Beale.
\newblock Spectral properties of an acoustic boundary condition.
\newblock {\em Indiana Univ. Math. J.}, 25(9):895--917, 1976.

\bibitem{BorTom10}
A. Borichev and Y. Tomilov.
\newblock Optimal polynomial decay of functions and operator semigroups.
\newblock {\em Math. Ann.}, 347(2):455--478, 2010.

\bibitem{ByrGil02}
C.~I. Byrnes, D.~S. Gilliam, V.~I. Shubov, and G. Weiss.
\newblock Regular linear systems governed by a boundary controlled heat equation.
\newblock {\em J. Dyn. Control Syst.}, 8(3):341--370, 2002.

\bibitem{CoxZua95}
S. Cox and E. Zuazua.
\newblock The rate at which energy decays in a string damped at one end.
\newblock {\em Indiana Univ. Math. J.}, 44(2):545--573, 1995.

\bibitem{CurMor09}
R. Curtain and K. Morris.
\newblock Transfer functions of distributed parameter systems: a tutorial.
\newblock {\em Automatica J. IFAC}, 45(5):1101--1116, 2009.


\bibitem{Duy07}
T. Duyckaerts.
\newblock Optimal decay rates of the energy of a hyperbolic-parabolic system
  coupled by an interface.
\newblock {\em Asymptot. Anal.}, 51(1):17--45, 2007.

\bibitem{GuoSha06}
Bao-Zhu Guo and Zhi-Chao Shao.
\newblock Regularity of an {E}uler-{B}ernoulli equation with {N}eumann control and collocated observation.
\newblock {\em J. Dyn. Control Syst.}, 12(3):405--418, 2006.

\bibitem{GuoZha07}
Bao-Zhu Guo and Zhi-Xiong Zhang.
\newblock On the well-posedness and regularity of the wave equation with
  variable coefficients.
\newblock {\em ESAIM Control Optim. Calc. Var.}, 13(4):776--792, 2007.

\bibitem{BenAmm16}
E.~M. Ait~Ben Hassi, K.~Ammari, S.~Boulite, and L.~Maniar.
\newblock Stability of abstract thermo-elastic semigroups.
\newblock {\em J. Math. Anal. Appl.}, 435(2):1021--1035, 2016.

\bibitem{MerNic18}
D. Mercier, S. Nicaise, M.~A. Sammoury, and A. Wehbe.
\newblock Indirect stability of the wave equation with a dynamic boundary
  control.
\newblock {\em Math. Nachr.}, 291(7):1114--1146, 2018.

\bibitem{Mil12}
L. Miller.
\newblock Resolvent conditions for the control of unitary groups and their
  approximations.
\newblock {\em J. Spectr. Theory}, 2(1):1--55, 2012.

\bibitem{RivQin03}
J.~E. Mu\~{n}oz Rivera and Yuming Qin.
\newblock Polynomial decay for the energy with an acoustic boundary condition.
\newblock {\em Appl. Math. Lett.}, 16(2):249--256, 2003.

\bibitem{Pau19}
L. Paunonen.
\newblock Stability and robust regulation of passive linear systems.
\newblock {\em SIAM J. Control Optim.}, to appear.
\newblock available at \url{https://arxiv.org/abs/1706.03224}.

\bibitem{RozSei19}
J. Rozendaal, D. Seifert, and R. Stahn.
\newblock Optimal rates of decay for operator semigroups on {H}ilbert spaces.
\newblock {\em Adv. Math.}, 346:359--388, 2019.

\bibitem{TucWei09book}
M.~Tucsnak and G.~Weiss.
\newblock {\em Observation and Control for Operator Semigroups}.
\newblock Birkh\"auser Basel, 2009.

\bibitem{TucWei14}
M. Tucsnak and G. Weiss.
\newblock Well-posed systems---the {LTI} case and beyond.
\newblock {\em Automatica J. IFAC}, 50(7):1757--1779, 2014.

\bibitem{Vil07phd}
J. Villegas.
\newblock {\em A Port-Hamiltonian Approach to Distributed Parameter Systems}.
\newblock PhD thesis, Universiteit Twente, Twente, The Netherlands, 2007.

\bibitem{Wei94}
G. Weiss.
\newblock Regular linear systems with feedback.
\newblock {\em Math. Control Signals Systems}, 7(1):23--57, 1994.

\bibitem{ZhaZua04}
Xu~Zhang and E. Zuazua.
\newblock Polynomial decay and control of a 1--d hyperbolic--parabolic coupled
  system.
\newblock {\em J. Differential Equations}, 204(2):380--438, 2004.

\bibitem{ZhaZua07}
Xu~Zhang and E. Zuazua.
\newblock Long-time behavior of a coupled heat-wave system arising in
  fluid-structure interaction.
\newblock {\em Arch. Ration. Mech. Anal.}, 184(1):49--120, 2007.

\bibitem{ZwaLeg10}
H. Zwart, Y. Le~Gorrec, B. Maschke, and J. Villegas.
\newblock Well-posedness and regularity of hyperbolic boundary control systems
  on a one-dimensional spatial domain.
\newblock {\em ESAIM Control Optim. Calc. Var.},
  16(4):1077--1093, 2010.

\end{thebibliography}
\end{document}